\documentclass[12pt]{amsart}

\usepackage{ucs}

\usepackage{amssymb}
\usepackage{amsthm}
\usepackage{amsmath}
\usepackage{latexsym}
\usepackage[cp1251]{inputenc}
\usepackage{graphicx}
\usepackage{wrapfig}
\usepackage{caption}
\usepackage{subcaption}
\usepackage{indentfirst}
\usepackage[left=2.6cm,right=2.6cm,top=2.6cm,bottom=2.6cm,bindingoffset=0cm]{geometry}
\usepackage{enumerate}
\usepackage{makecell}
\usepackage{float}

\DeclareMathOperator{\cay}{Cay}

\def\tm#1{\item[{\rm (#1)}]}

\makeatletter 
\def\@seccntformat#1{\csname the#1\endcsname. } 
\def\@biblabel#1{#1.} 

\newcommand{\overbar}[1]{\mkern 1.5mu\overline{\mkern-1.5mu#1\mkern-1.5mu}\mkern 1.5mu}

\makeatother

\title{Deza Cayley graphs from difference sets}

\author{Grigory Ryabov}

\address{Sobolev Institute of Mathematics, Novosibirsk, Russia}

\email{gric2ryabov@gmail.com}

\thanks{The author was supported by the state contract of the Sobolev Institute of Mathematics (project number FWNF-2022-0017)}

\date{}

\newtheorem{prop}{Proposition}[section]

\newtheorem{lemm}[prop]{Lemma}
\newtheorem{theo}[prop]{Theorem}

\theoremstyle{definition}
\newtheorem{defn}{Definition}[section]

\begin{document}

\maketitle

\begin{abstract}
In this note, we provide several constructions of Deza Cayley graphs over groups having a generalized dihedral subgroup. These constructions are based on usage of (relative) difference sets. 
\\
\\
\textbf{Keywords}: Deza graphs, Cayley graphs, difference sets.

\noindent\textbf{MSC}: 05B10, 20C05, 05E30. 
\end{abstract}

\maketitle

\section{Introduction}

A regular graph\footnote{Throughout the paper, by a graph we mean a simple undirected graph without loops.} $\Gamma$ is said to be \emph{strongly regular} if there exist nonnegative integers $\lambda$ and $\mu$ such that every two adjacent vertices of $\Gamma$ have exactly $\lambda$ common neighbors, whereas every two distinct nonadjacent vertices of $\Gamma$ have exactly $\mu$ common neighbors. Nowadays, strongly regular graphs are realized as one of the keynote objects in algebraic combinatorics. For a background of strongly regular graphs, we refer the readers to the monograph~\cite{BM}.

The following generalization of the notion of strongly regular graph going back to~\cite{Deza} was suggested in~\cite{EFHHH}. 

\begin{defn}
A regular graph $\Gamma$ is called a \emph{Deza} graph if there exist nonnegative integers $a$ and $b$ such that any two distinct vertices of $\Gamma$ have either $a$ or $b$ common neighbors. 
\end{defn}

\noindent The numbers $(v,k,b,a)$, where $v$ is the number of vertices of $\Gamma$ and $k$ is the degree of each vertex, are called the \emph{parameters} of $\Gamma$. We may always assume that $a\leq b$. Clearly, if $a>0$ and $b>0$, then $\Gamma$ has diameter~$2$. A Deza graph is called a \emph{strictly Deza graph} if it is not strongly regular and has diameter~$2$.

Deza graphs attract attention of several mathematicians. This class of graphs has been actively studied during the last years. Deza graphs with special parameters were investigated in~\cite{GHKS,KMS,KS}. Some results on spectra of Deza graphs were obtained in~\cite{AGHKKS,AHHKKS}. A vertex connectivity of Deza graphs was studied in~\cite{GGK,GP}. The class of Deza graphs with strongly regular children was a research object in~\cite{KKS}. For more details on Deza graphs and a recent progress in their studying, we refer the readers to the survey paper~\cite{GSh2}. Computational results on Deza graphs can be found in~\cite{GPSh}.

In the present paper, we are interested in Deza Cayley graphs. By a \emph{Cayley graph} $\Gamma=\cay(G,S)$ over a finite group $G$ with a non-empty inverse-closed identity-free connection set $S\subseteq G$, we mean the graph with vertex set $G$ and edge set $E=\{(g,xg):~x\in X,~g\in G\}$. All Deza Cayley graphs with at most~$60$ vertices were enumerated in~\cite{GSh}. Several results on Deza Cayley graphs over cyclic groups are given in~\cite{BPR} and~\cite[Section~2]{GSh2}, whereas several results on Deza Cayley graphs over dihedral groups are given in~\cite{RS}. A construction of a Deza Cayley graph with the largest possible association scheme and the smallest possible automorphism group can be found in~\cite{CR}.

One of the main problems concerned with Deza graphs is the problem of constructing new families of them. In this paper, we provide several constructions of Deza Cayley graphs. Recall that a generalized dihedral group associated with an abelian group~$A$ is defined to be a semidirect product of~$A$ and a cyclic group of order~$2$ whose nontrivial element inverses every element of~$A$. The main result of the paper is the theorem below stating an existence of two new infinite families of strictly Deza Cayley graphs over generalized dihedral groups.

\begin{theo}\label{main1}
There exist strictly Deza Cayley graphs with parameters
$$(2^{2k},2^{2k}-2^k-1,2^{2k}-2^{k+1},2^{2k}-2^{k+1}-2)$$
and
$$(q^2-1,q^2-q-2,q^2-2q-1,q^2-2q-3),$$
over the generalized dihedral groups associated with the groups~$\mathbb{Z}_4^{k-1}\times \mathbb{Z}_2$ and $\mathbb{Z}_{\frac{q^2-1}{2}}$, respectively, where $k\geq 3$ and $q\geq 5$ is an odd prime power.
\end{theo}

As we can check using the database of results on Deza graphs~\cite{Pan}, the graphs from Theorem~\ref{main1} are previously unknown except for the graphs with parameters~$(24,18,14,12)$ and~$(48,40,34,32)$ from the second family for $q=5$ and $q=7$ which appear among small Deza Cayley graphs from~\cite{GSh}. The constructions of Deza Cayley graphs from Theorem~\ref{main1} are based on usage of relative difference sets. Our technique enables us to find some more sporadic Deza Cayley graphs. 

\begin{theo}\label{main2}
There exists a Deza Cayley graph with parameters~$(v,k,b,a)$ from the first column of Table~$1$ over a generalized dihedral group associated with an abelian group~$A$ from the second column of Table~$1$.
\end{theo}

\begin{table}[h]
\centering
{\small
\begin{tabular}{|l|l|l|}
  \hline
  $(v,k,b,a)$ & $A$ & strictly  \\
  \hline
 $(14,9,6,4)$    & $\mathbb{Z}_7$  &  yes   \\ \hline
 $(16,5,2,0)$    &  $\mathbb{Z}_4\times \mathbb{Z}_2$, $\mathbb{Z}_2^3$ & no  \\  \hline
 $(16,11,8,6)$  &  $\mathbb{Z}_4\times \mathbb{Z}_2$, $\mathbb{Z}_2^3$  & yes\\  \hline
 $(22,16,12,10)$ & $\mathbb{Z}_{11}$  &  yes \\  \hline
 $(24,6,2,0)$ &  $\mathbb{Z}_{12}$  & no  \\ \hline
 $(32,25,20,18)$ & $\mathbb{Z}_8\times \mathbb{Z}_2$, $\mathbb{Z}_4^2$, $\mathbb{Z}_4\times \mathbb{Z}_2^2$, $\mathbb{Z}_2^4$  & yes \\  \hline
 $(64,11,2,0)$   & $\mathbb{Z}_4^2\times \mathbb{Z}_2$ &  yes\\  \hline
 $(74,64,56,54)$ & $\mathbb{Z}_{37}$  & yes\\  \hline
 $(80,12,2,0)$  &  $\mathbb{Z}_{40}$ & no \\  \hline
 $(252,235,220,218)$  & $\mathbb{Z}_{126}$  & yes\\  \hline
\end{tabular}
}
\caption{Sporadic Deza Cayley graphs}
\end{table}
  
As we are able to verify, the last four graphs from Table~$1$ are previously unknown, whereas the other ones can be found in the list of small Deza Cayley graphs from~\cite{GSh}. The graph with parameters~$(16,5,2,0)$ is isomorphic to the strongly regular van Lint-Schrijver graph (see~\cite[p.~371]{BM}).

Let us finish the introduction with a brief outline of the paper. Deza graphs from Theorems~\ref{main1} and~\ref{main2} are constructed from difference sets and relative difference sets (see Propositions~\ref{ds} and~\ref{rds}). A necessary information and families of (relative) difference sets used for constructing Deza Cayley graphs are given in Section~$2$. One of the main tools used in the proof of Proposition~\ref{rds} is computations in the integer group ring of $G$. A short background of group rings is given in Section~$3$. In Section~$4$, we provide constructions of Deza Cayley graphs and prove Theorems~\ref{main1} and~\ref{main2}. We observe in Section~$5$ that parameters of the graphs from Theorems~\ref{main1} and~\ref{main2} satisfy some arithmetic conditions which allows to construct from these graphs more Deza graphs using some operations on graphs. Several concluding remarks are given in Section~$6$.
 
\hspace{5mm}

\noindent \textbf{Acknowledgement:} The author would like to thank the anonymous referee for comments which help to improve the text substantially.

\section{Difference sets}

In this section, we give necessary information on difference sets. At first, let us recall the definition of a relative difference set. 

\begin{defn}
A subset $R$ of $G$ is called a \emph{relative difference set} (\emph{RDS} for short) in $G$ if there exist a subgroup~$N$ of~$G$ and a nonnegative integer $\lambda$ such that every element of $G\setminus N$ has exactly $\lambda$ representations in the form $g_1g_2^{-1}$, where $g_1,g_2\in R$, and no non-identity element of~$N$ has such a representation. 
\end{defn}

\noindent The subgroup~$N$ and the numbers $(m,n,k,\lambda)$, where $m=|G:N|$, $n=|N|$, and $k=|R|$, are called the \emph{forbidden subgroup} and \emph{parameters} of~$R$, respectively. The RDS $R$ is said to be \emph{nontrivial} if $2\leq |R|\leq |G|-2$. The latter implies that $\lambda>0$ and $m>1$. It is easy to check that $R$ contains at most one element from each right $N$-coset. If $R$ contains exactly one element from each right $N$-coset, i.e. $R$ is a right transversal for $N$ in $G$, then $R$ is said to be \emph{semiregular}. In this case, $m=k=\lambda n$. 

If $N$ is trivial, then $R$ is called a \emph{difference set} (\emph{DS} for short). If $R$ is a DS, then the parameters of $R$ are written as~$(v,k,\lambda)$, where $v=|G|$. It is well-known that if $R$ is a DS in~$A$ with parameters~$(v,k,\lambda)$, then $(A\setminus R)$ is a DS in~$A$ with parameters 
$$(v,v-k,v-2k+\lambda).$$ 

An RDS is a special case of a divisible difference set which can be considered as a Cayley object, i.e. a combinatorial object having a regular subgroup in its automorphism group, in the class of divisible designs. For a background of DSs and RDSs, we refer the readers to~\cite[Section~VI]{BJL} and~\cite{Pott2}, respectively.

Below, we provide families of DSs and RDSs which will be used for constructing Deza Cayley graphs. 

\begin{lemm}\label{smallds}
There exist DSs with parameters~$(7,3,1)$, $(11,6,3)$, $(16,10,6)$, and $(37,28,21)$ in abelian groups.
\end{lemm}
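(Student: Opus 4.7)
The plan is to exhibit, for each of the four parameter tuples, a concrete difference set in an abelian group, leaning on the complement formula recalled immediately before the lemma: if $R$ is a $(v,k,\lambda)$-DS in $A$, then $A\setminus R$ is a $(v,v-k,v-2k+\lambda)$-DS. Three of the four cases reduce, via this complement principle, to one of the standard Paley, Menon--Hadamard, or biplane constructions.

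For $(7,3,1)$, I would take the nonzero quadratic residues $R=\{1,2,4\}$ in $\mZ_7$; this is the classical Fano/Paley DS, and a direct count of the six ordered nonzero differences shows that each nonzero element of $\mZ_7$ is represented exactly once. For $(11,6,3)$, the Paley construction gives the DS of nonzero quadratic residues $\{1,3,4,5,9\}$ in $\mZ_{11}$ with parameters $(11,5,2)$; taking its complement and applying the formula yields the desired $(11,6,3)$-DS, with $v-2k+\lambda=11-10+2=3$.

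For $(16,10,6)$, I would take the complement of a Menon--Hadamard $(16,6,2)$-DS in $\mZ_2^4$; such DSs are classical and can either be written down explicitly (e.g. from the support of a bent function in four Boolean variables) or be cited from~\cite{BJL}. Applying the complement formula with $(v,k,\lambda)=(16,6,2)$ gives parameters $(16,10,6)$. For $(37,28,21)$, I would take the complement of a biplane $(37,9,2)$-DS in $\mZ_{37}$; a concrete representative such as $\{1,7,9,10,12,16,26,33,34\}$ can be checked by a direct count of its $72$ nonzero differences, and $v-2k+\lambda=37-18+2=21$.

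The main obstacle is bookkeeping in the last two cases: the Paley DSs of orders $7$ and $11$ are immediate, whereas the $(16,6,2)$ Menon--Hadamard DS and the $(37,9,2)$ biplane DS require either a short explicit verification of the difference count or a pointer to the standard references on Hadamard DSs and biplanes in~\cite{BJL}. Once those two DSs are in hand, the complement formula delivers all four required DSs with essentially no further work.
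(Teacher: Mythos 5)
Your proposal is correct and follows essentially the same route as the paper: both take the $(7,3,1)$-DS directly and obtain the other three as complements of the standard $(11,5,2)$, $(16,6,2)$, and $(37,9,2)$ difference sets via the complement parameter formula. The only difference is that you additionally write down explicit representatives (which are correct, e.g.\ your $(37,9,2)$ set is exactly the fourth-power residues modulo $37$), whereas the paper simply cites~\cite{BJL} for existence.
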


\begin{proof}
According to~\cite[Table~A.3.1]{BJL}, there exist DSs with parameters~$(7,3,1)$, $(11,5,2)$, $(16,6,2)$, and $(37,9,2)$ in abelian groups. The first of them and the complements to the other ones are the required DSs in the lemma. 
\end{proof}

\begin{lemm}\label{smallrds}
The following statements hold.
\begin{enumerate}

\tm{1} If $k\geq 3$, then the group $\mathbb{Z}_4^{k-1}\times \mathbb{Z}_2$ has an RDS with parameters~$(2^k,2^{k-1},2^k,2)$.

\tm{2} If $q\geq 5$ is an odd prime power, then the group $\mathbb{Z}_{\frac{q^2-1}{2}}$ has an RDS with parameters $(q+1,\frac{q-1}{2},q,2)$.

\tm{3} The groups $\mathbb{Z}_4\times \mathbb{Z}_2$ and $\mathbb{Z}_2^3$ have RDSs with parameters~$(4,2,4,2)$.

\tm{4} The group $\mathbb{Z}_{126}$ has an RDS with parameters~$(21,6,16,2)$.

\end{enumerate}

\end{lemm}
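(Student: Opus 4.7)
The plan is to prove each of the four statements by exhibiting an explicit RDS and verifying the defining equation $\underline{R}\cdot \underline{R}^{-1} = ke + \lambda(\underline{G} - \underline{N})$ in the integer group ring, invoking existing constructions from the RDS literature where convenient.

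For statement~(2), I would use the classical Bose construction. Identify $\mZ_{q^2-1}$ with $\GF(q^2)^*$ via a fixed primitive element $\alpha$; then the subgroup $N_0$ of index~$q+1$ corresponds to $\GF(q)^*$. For any $\beta\in \GF(q^2)\setminus \GF(q)$, set $R_0 = \{1 + c\beta : c\in \GF(q)\}\subset \GF(q^2)^*$. A short calculation shows that $(1+c\beta)/(1+c'\beta) \notin \GF(q)$ whenever $c\ne c'$, so $R_0$ is a $(q+1, q-1, q, 1)$-RDS with forbidden subgroup $N_0$. Since $q$ is odd, $-1\in \GF(q)^*$ generates a subgroup $H\le N_0$ of order~$2$; the ratios of distinct elements of $R_0$ lie outside $N_0$ and hence outside $H$, so the natural projection $\pi\colon\mZ_{q^2-1}\to \mZ_{(q^2-1)/2} = \mZ_{q^2-1}/H$ maps $R_0$ bijectively onto a set $R$ of the same cardinality~$q$. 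Applying $\pi$ to the defining equation and using $\pi(\underline{\mZ_{q^2-1}}) = 2\underline{\mZ_{(q^2-1)/2}}$ and $\pi(\underline{N_0}) = 2\underline{N_0/H}$ yields the required $(q+1, (q-1)/2, q, 2)$-RDS with forbidden subgroup $N_0/H$.

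For statement~(3), the two small cases can be checked directly. In $\mZ_2^3$, take $N = \langle(0,0,1)\rangle$ and $R = \{(0,0,0),(1,0,0),(0,1,0),(1,1,1)\}$: expanding $\underline{R}\cdot \underline{R}^{-1} = \underline{R}^2$ (exponent~$2$) gives $4e$ plus twice the sum of the six distinct pairwise sums $r_1+r_2$, which are precisely the six elements of $\mZ_2^3\setminus N$. A similar explicit choice works in $\mZ_4\times \mZ_2$. For statement~(1), I would invoke a standard construction of semiregular $(2^k,2^{k-1},2^k,2)$-RDSs in $\mZ_4^{k-1}\times \mZ_2$; such RDSs are classical examples of Menon/Davis--Jedwab type and are documented in Pott's monograph on RDSs. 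For statement~(4), a $(21,6,16,2)$-RDS in $\mZ_{126}$ is catalogued in the literature and can be verified by direct computation in the group ring.

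The main obstacle is statement~(1): a self-contained proof valid for all $k\geq 3$ requires either a nontrivial recursive/inductive construction or an appeal to a sufficiently strong reference, as opposed to a purely elementary one-line calculation. Statements~(2)--(4) are essentially routine once the correct explicit RDSs are written down.
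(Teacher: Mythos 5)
Your proposal is correct, but only part of it coincides with the paper's proof, which is a pure literature citation: statement (1) from \cite[Theorem~1]{DJ} applied with $a=k$, $b=k-1$; statements (2) and (3) from \cite[Theorem~1.2]{ADLM} and \cite[Theorem~2.1]{MS}; statement (4) from \cite{Lam}. For (1) and (4) you do essentially the same thing (and for (1) this is hard to avoid short of reproducing a genuinely nontrivial recursive construction, as you note), though you should pin the reference down precisely --- the paper uses \cite[Theorem~1]{DJ} --- rather than gesturing at ``Pott's monograph''. Where you genuinely diverge is in (2) and (3). For (2) you build the $(q+1,\tfrac{q-1}{2},q,2)$-RDS from scratch: the Bose affine-line set $R_0=\{1+c\beta: c\in\GF(q)\}$ is a $(q+1,q-1,q,1)$-RDS in $\GF(q^2)^*$ relative to $\GF(q)^*$, and pushing it through the order-two subgroup $\{\pm 1\}\leq\GF(q)^*$ doubles $\lambda$; this is sound (injectivity of the projection on $R_0$ holds because ratios of distinct elements avoid $\GF(q)^*\supseteq\{\pm 1\}$, and the group-ring identity transforms exactly as you say), and it makes the lemma self-contained where the paper simply quotes \cite{ADLM}. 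For (3) your explicit set in $\mathbb{Z}_2^3$ checks out --- the six pairwise sums are precisely the six elements outside $\langle(0,0,1)\rangle$, each occurring twice among ordered differences --- but the $\mathbb{Z}_4\times\mathbb{Z}_2$ case is left as ``a similar choice works''; that is true (e.g.\ $R=\{(0,0),(1,0),(2,1),(3,0)\}$ with forbidden subgroup $\langle(0,1)\rangle$) but should be written out to close the argument. The trade-off is the usual one: your route for (2)--(3) is longer but verifiable without external sources, while the paper's is a one-line appeal to known families.
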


\begin{proof}
Statement~$(1)$ follows from~\cite[Theorem~$1$]{DJ} applied to $a=k$ and $b=k-1$. Statements~$(2)$ and~$(3)$ are special cases of~\cite[Theorem~$1.2$]{ADLM} when $d=2$ and $n=\frac{q-1}{2}$ and~\cite[Theorem~2.1]{MS} when $p=2$ and $c=1$, respectively. Finally, Statement~$(4)$ is taken from~\cite{Lam} (see also~\cite[Result~$3$]{ADLM}).
\end{proof}

\section{Group ring}

Let $G$ be a finite group and $\mathbb{Z}G$ the integer group ring. The identity element and the set of all nonidentity elements of $G$ are denoted by~$e$ and~$G^\#$, respectively. A product in the group ring of two elements $x=\sum_{g\in G} x_g g,y=\sum_{g\in G} y_g g\in\mathbb{Z}G$ will be written as $x\cdot y$. If $X\subseteq G$, then the set $\{x^{-1}:x\in X\}$ and the element $\sum \limits_{x\in X} {x}$ of the group ring $\mathbb{Z}G$ are denoted by~$X^{-1}$ and~$\underline{X}$, respectively. 

It is easy to verify that
\begin{equation}\label{easy}
\underline{X}\cdot \underline{H}=\underline{H}\cdot \underline{X}=|X|\underline{H}
\end{equation}
for all $X\subseteq G$ and $H\leq G$ such that $X\subseteq H$. Using Eq.~\eqref{easy}, one can deduce that
\begin{equation}\label{group}
(\underline{H}^\#)^2=(\underline{H}-e)^2=(|H|-1)e+(|H|-2)\underline{H}^\#.
\end{equation}

Let $N$ be a subgroup of $G$, $m=|G:N|$, and $n=|N|$. It follows easily from the definition of RDS that a subset $R$ of $G$ is an RDS with forbidden subgroup~$N$ and parameters~$(m,n,k,\lambda)$, where $k=|R|$ and $\lambda\geq 0$, if and only if
\begin{equation}\label{defrds}
\underline{R}\cdot \underline{R}^{-1}=ke+\lambda(\underline{G}-\underline{R}).
\end{equation} 

\begin{lemm}\label{semireg}
Let $R$ be an RDS in a group $G$ with a forbidden subgroup $N$. Then $\underline{N}\cdot \underline{R}=G$ if and only if $R$ is semiregular.
\end{lemm}

\begin{proof}
Recall that $R$ contains at most one element from each right $N$-coset. So the equality $\underline{N}\cdot \underline{R}=G$ is equivalent to the fact that $R$ is a transversal for $N$ in $G$ which means that $R$ is semiregular. 
\end{proof}

The following lemma provides an easy criterion for a Cayley graph to be a Deza graph in terms of the integer group ring. It can be found, e.g, in~\cite[Lemma~$5.2$]{BPR}. 

\begin{lemm}\label{deza}
Let $G$ be a finite group of order~$v$, $S\subseteq G$ such that $e\notin S$, $S=S^{-1}$, and $|S|=k$, $a$ and $b$ nonnegative integers, and $\Gamma=\cay(G,S)$. The graph $\Gamma$ is a Deza graph with parameters $(v,k,b,a)$ if and only if 
$$\underline{S}^2=ke+a\underline{S_a}+b\underline{S_b},$$
where $S_a$ and $S_b$ are subsets of $G$ such that $S_a\cup S_b=G^\#$ and $S_a\cap S_b=\varnothing$. Moreover, $\Gamma$ is strongly regular if and only if $S_a=S$ or $S_b=S$.
\end{lemm}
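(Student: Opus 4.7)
The plan is to translate the common-neighbor count in the Cayley graph $\Gamma = \cay(G, S)$ directly into coefficients of the group-ring element $\underline{S}\cdot\underline{S}^{-1}=\underline{S}^2$, where the second equality uses $S=S^{-1}$. The whole lemma is then essentially a dictionary-unpacking.

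First I would establish the counting identity: for any distinct $g, h\in G$, the number of common neighbors of $g$ and $h$ in $\Gamma$ equals the coefficient of $hg^{-1}$ in $\underline{S}^2$. Indeed, a common neighbor $y$ of $g$ and $h$ satisfies $y = s_1 g = s_2 h$ for some $s_1, s_2 \in S$, which rewrites as $hg^{-1} = s_2^{-1} s_1$; since $S = S^{-1}$, the number of such representations is exactly this coefficient. Separately, the coefficient of $e$ in $\underline{S}^2$ is the number of $s \in S$ with $s^{-1}\in S$, which equals $k$, giving the $ke$ summand. With this counting identity in hand, both directions of the main equivalence are immediate: if $\Gamma$ is Deza with parameters $(v,k,b,a)$, define $S_a = \{w\in G^\#: [\underline{S}^2]_w = a\}$ and $S_b = G^\#\setminus S_a$, so that $\underline{S}^2 = ke + a\underline{S_a} + b\underline{S_b}$ holds; conversely, any such decomposition shows at once that any two distinct vertices have $a$ or $b$ common neighbors.

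For the ``moreover'' part, $\Gamma$ is strongly regular precisely when $[\underline{S}^2]_w$ takes one value $\lambda$ on $w\in S$ and another value $\mu$ on $w\in G^\#\setminus S$. When $a\ne b$ the partition $(S_a,S_b)$ is uniquely determined by the coefficients, and the SRG condition translates exactly to $\{S_a,S_b\} = \{S,\, G^\#\setminus S\}$, i.e., to $S_a=S$ or $S_b=S$. When $a=b$ every pair of distinct vertices has the same number of common neighbors, so $\Gamma$ is automatically strongly regular and the (non-unique) partition can simply be chosen so that $S_a=S$. This mild non-uniqueness at $a=b$ is the only subtle point in the argument; otherwise the proof is formal bookkeeping in $\mathbb{Z}G$, with no real obstacle.
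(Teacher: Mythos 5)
Your proof is correct and is the standard argument: the coefficient of $hg^{-1}$ in $\underline{S}^2=\underline{S}\cdot\underline{S}^{-1}$ counts the common neighbours of $g$ and $h$, the coefficient of $e$ is $k$, and the rest is bookkeeping in $\mathbb{Z}G$, including the $a=b$ degeneracy in the ``moreover'' part, which you rightly flag. The paper itself gives no proof of this lemma --- it is imported verbatim from \cite[Lemma~5.2]{BPR} --- so there is no in-paper argument to compare against; yours is exactly the expected one.
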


\section{Constructions}

In this section, we provide the constructions of Deza Cayley graphs based on usage of DSs and RDSs. Throughout the section, $A$ is an abelian group and $G=A\rtimes \langle b\rangle$, where $|b|=2$ and $a^b=a^{-1}$ for every $a\in A$, i.e. $G$ is a generalized dihedral group associated with~$A$.

The following statement in case of a dihedral group was proved in~\cite[Lemma~$3.1$]{RS}. However, the proof in case of a generalized dihedral group is exactly the same.

\begin{prop}\label{ds}
Let $R$ be a nontrivial DS in $A$ with parameters $(v,k,\lambda)$. Then the graph $\Gamma(R)=\cay(G,Rb\cup A^\#)$ is a Deza graph if and only if 
$$k=\frac{2v-1-\sqrt{8v-7}}{2}.$$ 
In this case, $\Gamma(R)$ is a strictly Deza graph and has parameters $(2v,v-1+k,2k,2(k-1))$. 
\end{prop}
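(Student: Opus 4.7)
The plan is to verify the hypotheses of Lemma~\ref{deza} by expanding $\underline{S}^2$ in $\mathbb{Z}G$ for $S=Rb\cup A^\#$, and then read off the Deza condition as a constraint on the parameters of $R$. First I check $S$ is a valid connection set: $e\notin S$, and in the generalized dihedral group $G$ every element of $Ab$ is an involution (since $(ab)^2=a\cdot a^b=a\cdot a^{-1}=e$), so $(Rb)^{-1}=Rb$; combined with $(A^\#)^{-1}=A^\#$ this gives $S=S^{-1}$ and $|S|=v-1+k$. Expanding yields
\[
\underline{S}^2 \;=\; (\underline{R}\,b)^2 \;+\; \underline{R}\,b\cdot \underline{A^\#} \;+\; \underline{A^\#}\cdot \underline{R}\,b \;+\; (\underline{A^\#})^2.
\]

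The key computation uses the generalized dihedral relation $a^b=a^{-1}$ on $A$, which gives $b\,\underline{R}\,b^{-1}=\underline{R^{-1}}$ and hence $(\underline{R}\,b)^2=\underline{R}\cdot\underline{R^{-1}}$. Because $R$ is a DS in $A$, this equals $(k-\lambda)e+\lambda\underline{A}$. The two mixed terms each collapse to $k\underline{Ab}-\underline{R}b$ by Eq.~\eqref{easy} (applied to $\underline{R}\cdot\underline{A}=k\underline{A}$), and $(\underline{A^\#})^2=(v-1)e+(v-2)\underline{A^\#}$ by Eq.~\eqref{group}. Collecting everything, $\underline{S}^2$ becomes $(v+k-1)e$ plus a $G^\#$-contribution with exactly three candidate coefficients on the three disjoint pieces $A^\#$, $Rb$, and $Ab\setminus Rb$, namely $\lambda+v-2$, $2(k-1)$, and $2k$ respectively.

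By Lemma~\ref{deza}, $\Gamma(R)$ is Deza iff at most two of these three coefficients occur. Since $2k\neq 2(k-1)$, this forces either $\lambda+v-2=2k$ or $\lambda+v-2=2(k-1)$. Substituting each case into the fundamental DS identity $k(k-1)=\lambda(v-1)$ turns the condition into a quadratic in $k$. The first case yields $k^2-(2v-1)k+(v-1)(v-2)=0$, with discriminant $8v-7$; the relevant root (necessarily satisfying $k\leq v-2$) is exactly $k=(2v-1-\sqrt{8v-7})/2$. The second case reduces to $k\in\{v-1,v\}$, both of which violate the nontriviality assumption $|R|\leq v-2$ on $R$, so this branch is vacuous.

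In the surviving case, with $S_a=Rb$ and $S_b=G^\#\setminus Rb$, Lemma~\ref{deza} gives the claimed Deza parameters $(2v,\,v-1+k,\,2k,\,2(k-1))$. Finally, strictness follows from the second part of Lemma~\ref{deza}: neither $S_a$ nor $S_b$ coincides with $S$, since $S_a=Rb$ is disjoint from $A^\#\subseteq S$, while $S_b\supseteq Ab\setminus Rb$ is disjoint from $Rb\subseteq S$. Combined with $a=2(k-1)>0$ (because nontriviality gives $k\geq 2$) and $b=2k>0$, this yields the diameter-$2$ strictly Deza conclusion. The main technical point to watch is the conjugation $b\,\underline{X}\,b^{-1}=\underline{X^{-1}}$ inside the semidirect product; everything else is routine group-ring bookkeeping.
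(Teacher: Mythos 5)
Your proof is correct and follows exactly the method the paper uses for its analogous results (the paper itself omits a proof of this proposition, deferring to \cite[Lemma~3.1]{RS}, but its proof of Proposition~\ref{rds} is the same style of argument): expand $\underline{S}^2$ in $\mathbb{Z}G$ using $b\underline{X}b^{-1}=\underline{X}^{-1}$, the DS identity, and Eqs.~\eqref{easy}--\eqref{group}, then apply Lemma~\ref{deza} to the three coefficients $\lambda+v-2$, $2(k-1)$, $2k$ on $A^\#$, $Rb$, $Ab\setminus Rb$. Your case analysis correctly eliminates the branch $\lambda+v-2=2(k-1)$ and the larger root of the quadratic via nontriviality, and the strictness argument via $S_a,S_b\neq S$ is sound.
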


\begin{prop}\label{rds}
Let $R$ be a nontrivial RDS in $A$ with a nontrivial forbidden subgroup $A_0$ and parameters $(m,n,k,\lambda)$. 
\begin{enumerate}

\tm{1} The graph $\Gamma_1(R)=\cay(G,S_1)$, where $S_1=Rb\cup A_0^\#$, is a Deza graph if and only if 
$$(n,\lambda)=(2,2)~\text{or}~(n,\lambda)=(4,2).$$
In this case, $\Gamma_1(R)$ has parameters 
$$(2mn,k+n-1,2,0)$$  
and it is of diameter~$2$ if and only if $R$ is semiregular. If the latter is the case, then $\Gamma_1(R)$ is strongly regular if and only if $(n,\lambda)=(2,2)$.

\tm{2} The graph $\Gamma_2(R)=\cay(G,S_2)$, where $S_2=G^\#\setminus Rb$, is a Deza graph if and only if 
$$\lambda=2.$$
In this case, $\Gamma_2(R)$ is a strictly Deza graph and has parameters 
$$(2mn,2mn-k-1,2mn-2k,2mn-2k-2).$$

\end{enumerate}

\end{prop}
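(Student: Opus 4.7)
My plan is to verify the Deza property by computing $\underline{S_i}^2$ in $\mathbb{Z}G$ and invoking Lemma~\ref{deza}. For both parts, the computation reduces the square to a $\mathbb{Z}$-linear combination of indicators of a small partition of $G$, and Lemma~\ref{deza} then translates the Deza condition into the statement that at most two distinct coefficients appear on $G^\#$.

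\emph{Part (1).} Decompose
\[
\underline{S_1}^2 = (\underline{Rb})^2 + \underline{Rb}\cdot\underline{A_0}^\# + \underline{A_0}^\#\cdot\underline{Rb} + (\underline{A_0}^\#)^2,
\]
and evaluate each summand: the first equals $\underline{R}\cdot\underline{R}^{-1}$ and is handled by the RDS identity~\eqref{defrds}; the middle two collapse via $bab^{-1}=a^{-1}$ and the equality $\underline{R}\cdot\underline{A_0} = \underline{RA_0}$ (which holds because $R$ meets each $A_0$-coset at most once), jointly contributing $2\,\underline{(RA_0\setminus R)b}$; and the last is given by~\eqref{group}. Reading off coefficients on the partition of $G^\#$ into the pieces $A_0^\#$, $A\setminus A_0$, $Rb$, $(RA_0\setminus R)b$, $(A\setminus RA_0)b$ produces the values $n-2$, $\lambda$, $0$, $2$, $0$, respectively. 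The Deza condition from Lemma~\ref{deza} is exactly that the four-element multiset $\{n-2,\lambda,2,0\}$ collapses to at most two entries; since $\lambda\ge 1$ (forced by nontriviality of $R$) and $0\ne 2$, the only solutions are $(n,\lambda)\in\{(2,2),(4,2)\}$, giving level values $\{a,b\}=\{0,2\}$ and parameters $(2mn,\,k+n-1,\,2,\,0)$. Diameter~$2$ amounts to $(A\setminus RA_0)b$ being empty, which by Lemma~\ref{semireg} is semiregularity of $R$; and under semiregularity, Lemma~\ref{deza} shows that $\Gamma_1(R)$ is strongly regular precisely when $(n,\lambda)=(2,2)$, where the coefficient-$0$ set on $G^\#$ becomes $A_0^\#\cup Rb = S_1$.

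\emph{Part (2).} With $\underline{S_2} = \underline{A}^\# + (\underline{A}-\underline{R})b$, the same type of expansion, now additionally using $\underline{A}^2 = mn\underline{A}$ and $\underline{A}\cdot\underline{R} = k\underline{A}$ from~\eqref{easy}, yields coefficients on $G^\#$ equal to $2(mn-k-1)$ on $A_0^\#\cup(A\setminus R)b$, $2(mn-k-1)+\lambda$ on $A\setminus A_0$, and $2(mn-k)$ on $Rb$. The two values $2(mn-k-1)$ and $2(mn-k)$ always differ by~$2$, so the multiset contains at most two distinct entries iff $\lambda=2$; then $a=2(mn-k-1)$ and $b=2(mn-k)$ yield the stated parameters. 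Since $A_0^\#$ and $A\setminus A_0$ are both nonempty and sit on opposite levels, neither level set of $\underline{S_2}^2$ equals $S_2=A^\#\cup(A\setminus R)b$, so by Lemma~\ref{deza} the graph is not strongly regular; and $|R|\le mn-2$ forces $2(mn-k-1)\ge 2>0$, so every element of $G^\#$ has positive common-neighbor count with $e$, and $\Gamma_2(R)$ has diameter~$2$.

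The main technical subtlety is the $b$-cross-terms in Part~(1), where the decomposition via $(RA_0\setminus R)b$ and the identity $\underline{R}\cdot\underline{A_0}=\underline{RA_0}$ are crucial; the rest is a mechanical application of~\eqref{easy},~\eqref{group}, and~\eqref{defrds}.
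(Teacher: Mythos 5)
Your proposal is correct and follows essentially the same route as the paper: expand $\underline{S_i}^2$ in $\mathbb{Z}G$ using Eqs.~\eqref{easy}--\eqref{defrds} and the relation $a^b=a^{-1}$, identify the cross-term $\underline{A_0}^\#\cdot\underline{R}$ as a $0/1$-element via the coset-transversal property of $R$, and read off the coefficient partition of $G^\#$ to apply Lemma~\ref{deza}. The only cosmetic differences are your decomposition $S_2=A^\#\cup(A\setminus R)b$ in place of $G^\#\setminus Rb$ and your explicit verification of strictness in Part~(2), which the paper leaves implicit.
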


\begin{proof}
Observe that $\lambda>0$ because $R$ is nontrivial. Let us prove Statement~$(1)$. Clearly, $|G|=2|A|=2mn$ and $|Rb\cup A_0^\#|=|Rb|+|A_0|-1=k+n-1$ and hence $\Gamma_1(R)$ is a $(k+n-1)$-regular graph on~$2mn$ vertices. A straightforward computation in the group ring of $G$ using the equality $a^b=a^{-1}$ for every $a\in A$ and Eqs.~\eqref{group} and~\eqref{defrds} implies that
\begin{equation}\label{compute1}
\begin{split}
\underline{S_1}^2=(\underline{Rb}+\underline{A_0}^\#)^2=\underline{R}\cdot \underline{R}^{-1}+(|A_0|-1)e+(|A_0|-2)\underline{A_0}^\#+2\underline{A_0}^\#\cdot \underline{R}b=\\
=(k+n-1)e+(n-2)\underline{A_0}^\#+\lambda(\underline{A}-\underline{A_0})+2\underline{A_0}^\#\cdot \underline{R}b.
\end{split}
\end{equation}
Since $R$ contains at most one element from each $A_0$-coset, every element of $A$ enters the element $\underline{A_0}^\#\cdot \underline{R}$ with coefficient~$0$ or~$1$. Moreover, every element of $R$ enters $\underline{A_0}^\#\cdot \underline{R}$ with coefficient~$0$ because $e\notin A_0^\#$. From Lemma~\ref{deza} and Eq.~\eqref{compute1} it follows that $\Gamma_1(R)$ is a Deza graph if and only if 
$$|\{n-2,\lambda,0,2\}|\leq 2$$ 
or, equivalently, 
$$\{n-2,\lambda\}\subseteq \{0,2\}.$$
Since $\lambda>0$, we conclude that $\lambda=2$ and hence $(n,\lambda)\in\{(2,2),(4,2)\}$. If the latter holds, then $\Gamma_1(R)$ has the required parameters by Lemma~\ref{deza} and Eq.~\eqref{compute1}. Observe that $\Gamma_1(R)$ has diameter~$2$ if and only if every element of
$$G^\#\setminus S_1=(A\setminus A_0) \cup (A\setminus R)b$$
enters the element $\underline{S_1}^2$ with a nonzero coefficient. Due to Eq.~\eqref{compute1}, this is equivalent to 
$$A\setminus R\subseteq A_0^\#R.$$
The latter inclusion holds if and only if $R$ is semiregular by Lemma~\ref{semireg}. The remaining part of Statement~$(1)$ follows from Lemma~\ref{deza} and Eq.~\eqref{compute1}.

Now let us prove Statement~$(2)$. In this case, $|G^\#\setminus Rb|=|G|-1-|Rb|=2mn-k-1$. So $\Gamma_2(R)$ is a $(2mn-k-1)$-regular graph on~$2mn$ vertices. One can compute using the equality $a^b=a^{-1}$ for every $a\in A$ and Eqs.~\eqref{easy}, \eqref{group}, and~\eqref{defrds} that
\begin{equation}\label{compute2}
\begin{split}
\underline{S_2}^2=(\underline{G}^\#-\underline{Rb})^2=(|G|-1)e+(|G|-2)G^\#+\underline{R}\cdot \underline{R}^{-1}-2\underline{G}^\#\cdot \underline{R}b=\\
=(2mn-k-1)e+2(mn-k-1)\underline{A_0}^\#+(2(mn-k-1)+\lambda)(\underline{A}-\underline{A_0})+\\
+2(mn-k)\underline{R}b+2(mn-k-1)(\underline{A}-\underline{R})b.
\end{split}
\end{equation}
Due to Lemma~\ref{deza} and Eq.~\eqref{compute2}, the graph $\Gamma_2(R)$ is a Deza graph if and only if 
$$|\{2(mn-k-1),2(mn-k),2(mn-k-1)+\lambda\}|\leq 2,$$
or, equivalently, 
$$2(mn-k-1)+\lambda\in \{2(mn-k-1),2(mn-k)\}.$$
Since $\lambda>0$, we obtain $\lambda=2$. The remaining part of Statement~$(2)$ is a consequence of Lemma~\ref{deza} and Eq.~\eqref{compute2}. 
\end{proof}

\begin{proof}[Proof of Theorems~\ref{main1} and~\ref{main2}]
Theorem~\ref{main1} is a consequence of Statement~$(2)$ of Proposition~\ref{rds} applied to the RDSs from Statements~$(1)$ and~$(2)$ of Lemma~\ref{smallrds}. 

It is easy to verify that the parameters of each DS from Lemma~\ref{smallds} satisfy the condition $k=\frac{2v-1-\sqrt{8v-7}}{2}$ and hence applying Proposition~\ref{ds} to these DSs, we obtain the strictly Deza Cayley graphs with parameters $(14,9,6,4)$, $(22,16,12,10)$, $(32,25,20,18)$, and~$(74,64,56,54)$ from Theorem~\ref{main2}. Other graphs from Theorem~\ref{main2} can be obtained by applying Statement~$(1)$ of Proposition~\ref{rds} to the RDSs from Statement~$(1)$ with $k=3$, Statement~$(2)$ with $q=5$ and $q=9$, and Statement~$(3)$ of Lemma~\ref{smallrds} and applying of Statement~$(2)$ of Proposition~\ref{rds} to the RDSs from Statements~$(3)$ and~$(4)$ of Lemma~\ref{smallrds}.
\end{proof}

\section{More Deza graphs}

In this section, we construct more Deza Cayley graphs using some operations. The first of the operations is the lexicographic product of graphs (see, e.g.,~\cite[p.~398]{EFHHH}). Recall that the \emph{lexicographic product} $\Gamma_1[\Gamma_2]$ of graphs $\Gamma_1=(V_1,E_1)$ and $\Gamma_2=(V_2,E_2)$ is defined to be the graph with vertex set $V=V_1\times V_2$ and edge set $E$ defined as follows:
$$((v_1,v_2),(u_1,u_2))\in E~\text{if and only if}~(v_1,u_1)\in E_1~\text{or}~v_1=u_1~\text{and}~(v_2,u_2)\in E_2.$$

Recall that the \emph{Cartesian product} $\Gamma_1\times \Gamma_2$ of $\Gamma_1$ and $\Gamma_2$ is defined to be the graph with vertex set $V=V_1\times V_2$ and edge set $E$ defined as follows:
$$((v_1,v_2),(u_1,u_2))\in E~\text{if and only if}~v_1=u_1~\text{and}~(v_2,u_2)\in E_2~\text{or}~v_2=u_2~\text{and}~(v_1,u_1)\in E_1.$$
The clique on $m$ vertices is denoted by~$K_m$ and the complement to the graph $\Gamma$ is denoted by $\overbar{\Gamma}$. The second operation is defined as follows:
$$\Gamma\mapsto \overbar{\overbar{\Gamma}\times K_2}.$$
Using the definitions, it is easy to verify that $\overbar{\overbar{\Gamma}\times K_2}$ is isomorphic to a disjoint union of two copies of $\Gamma$ in which each vertex from the one copy is adjacent to each vertex from the other one except its own copy.

\begin{lemm}\label{operat}
Let $\Gamma$ be a strictly Deza graph with parameters $(v,k,2k-v+2,2k-v)$. Then $K_m[\Gamma]$, $m\geq 1$, and $\overbar{\overbar{\Gamma}\times K_2}$ are strictly Deza graphs with parameters 
$$(mv,k+(m-1)v,(m-2)v+2k+2,(m-2)v+2k)$$ and 
$$(2v,k+v-1,2k,2k-2),$$
respectively. 
\end{lemm}

\begin{proof}
The graph $K_m[\Gamma]$ is a Deza graph with the required parameters by~\cite[Proposition~2.3]{EFHHH} applied to the strongly regular graph $\Gamma_1=K_m$ and the Deza graph $\Gamma_2=\Gamma$. 

By the definition, the graph $\Delta=\overbar{\overbar{\Gamma}\times K_2}$ is a $(k+v-1)$-regular graph on~$2v$ vertices. Let $u$ and $v$ be distinct vertices of $\Delta$. If $u$ and $v$ belong to the same copy of $\Gamma$, then they have $2k-v+2$ or $2k-v$ common neighbors in this copy and $v-2$ common neighbors in the other copy. Therefore they have $2k$ or $2k-2$ common neighbors. Suppose that $u$ and $v$ belong to distinct copies of $\Gamma$. If the copy of $u$ is adjacent to $v$, then $u$ and $v$ have $2k-2$ common neighbors, whereas otherwise they have $2k$ common neighbors. Thus, $\Delta$ is a Deza graph with parameters $(2v,k+v-1,2k,2k-2)$. Since $\Gamma$ is a strictly Deza graph, $\Delta$ so is.  
\end{proof}

Note that if $\Gamma$ is a Cayley graph, then $K_m[\Gamma]$, $m\geq 1$, and $\overbar{\overbar{\Gamma}\times K_2}$ so are. Indeed, let $\Gamma=\cay(H,S)$ for some group $H$ and an identity-free inverse-closed subset $S$ of $H$. Then 
$$K_m[\Gamma]\cong \cay(G,S\cup (G\setminus H)),$$ 
where $G$ is any group such that $G\geq H$ and $|G:H|=m$, and 
$$\overbar{\overbar{\Gamma}\times K_2}\cong \cay(H\times C,S\cup H^\#c),$$ 
where $C\cong \mathbb{Z}_2$ and $c$ is the nontrivial element of~$C$.

Let $\mathcal{K}$ be the class consisting of all Deza graphs from Theorem~\ref{main1} and the Deza graphs from Theorem~\ref{main2} with a nonzero fourth parameter. It can be verified by a straightforward computation that parameters of every graph from $\mathcal{K}$ satisfy the condition of Lemma~\ref{operat}, i.e. they are of the form $(v,k,2k-v+2,2k-v)$. One can see that if $\Gamma$ has parameters of the form $(v,k,2k-v+2,2k-v)$, then $K_m[\Gamma]$, $m\geq 1$, and $\overbar{\overbar{\Gamma}\times K_2}$ have parameters of this form. Therefore the closure of $\mathcal{K}$ under the operations 
$$\Gamma\mapsto K_m[\Gamma],~m\geq 1,~\text{and}~\Gamma\mapsto \overbar{\overbar{\Gamma}\times K_2}$$ 
consists of strictly Deza Cayley graphs.

Let $\mathcal{M}$ be the class consisting of all graphs from Theorem~\ref{main2} with a zero fourth parameter. One can see that every graph from $\mathcal{M}$ has parameters of the form~$(v,k,2,0)$. Due to~\cite[Theorem~2.8(iii)-(iv)]{EFHHH}, the closure of $\mathcal{M}$ under the operations 
$$\Gamma \mapsto \overbar{K}_m \times \Gamma,~m\geq 1~\text{and}~(\Gamma_1,\Gamma_2)\mapsto \Gamma_1\times \Gamma_2$$ 
consists of Deza Cayley graphs. One can verify straightforwardly using the definitions of the above operations that all of these graphs are non-strictly Deza graphs except for the graph with parameters~$(64,11,2,0)$ from Theorem~\ref{main2}.

\section{Concluding remarks}

We do not know whether there exist other DSs with parameters satisfying the condition $k=\frac{2v-1-\sqrt{8v-7}}{2}$ which can be used for the construction from Proposition~\ref{ds} and are not covered by Lemma~\ref{smallds}. Checking using the database of DSs~\cite{Gordon} implies that there is no other such a DS with $v\leq 100000$. We are not able to find RDSs with parameters satisfying one of the conditions $(n,\lambda)=(2,2)$, $(n,\lambda)=(4,2)$, $\lambda=2$ which can be used for the construction from Proposition~\ref{rds} and are not covered by Lemma~\ref{smallrds}.

We would like to mention that it is possible to consider several other Cayley graphs over generalized dihedral groups arising from DSs and RDSs and not covered by Propositions~\ref{ds} and~\ref{rds}. One can obtain necessary and sufficient conditions for these graphs to be Deza graphs in terms of parameters of DSs or RDSs. We could find appropriate RDSs only for two infinite families among these graphs which are exactly divisible design graphs (see~\cite{HKM} for the definition) from~\cite[Corollary~6.3]{Ry}. Observe also that the graphs from Propositions~\ref{ds} and~\ref{rds} are not divisible design graphs.


\begin{thebibliography}{list}
 
\bibitem{AGHKKS}
\emph{S. Akbari, A.H. Ghodrati, M.A. Hosseinzadeh, V.V. Kabanov, E.V. Konstantinova, L. Shalaginov}, Spectra of Deza graphs, Linear Multilinear A., \textbf{70}, No.~2 (2020), 310--321.

\bibitem{AHHKKS}
\emph{S. Akbari, W.H. Haemers, M.A. Hosseinzadeh, V.V. Kabanov, E.V. Konstantinova, L. Shalaginov}, Spectra of strongly Deza graphs, Discrete Math., \textbf{344}, No.~12 (2021), Article ID 112622. 



\bibitem{ADLM}
\emph{K.~T.~Arasu, J.~F.~Dillon,~K.~H.~Leung,~S.~L.~Ma}, Cyclic Relative Difference Sets with Classical Parameters, J. Comb. Theory Ser. A, \textbf{94} (2001), 118--126.


\bibitem{BJL}
\emph{T.~Beth, D.~Jungnickel, H.~Lenz}, Design Theory, 2nd edition, Cambridge University Press, Cambridge (1999).



\bibitem{BPR}
\emph{R.~Bildanov, V.~Panshin,~G.~Ryabov}, On WL-rank and WL-dimension of some Deza circulant graphs,  Graphs Combin., \textbf{37}, No.~6 (2021), 2397--2421.


\bibitem{BM}
\emph{A.~E. Brouwer, H. Van Maldeghem}, Strongly regular graphs, Encyclopedia of Mathematics and its Applications, Cambridge University Press \textbf{182} (2022).


\bibitem{CR}
\emph{D.~Churikov, G.~Ryabov}, On WL-rank of Deza Cayley graphs, Discrete Math., \textbf{345}, No.~2 (2022), Article ID 112692.




\bibitem{DJ}
\emph{J.~A.~Davis,~J.~Jedwab}, A new family of relative difference sets in $2$-groups, Des. Codes Cryptogr., \textbf{17} (1998), 305--312.



\bibitem{Deza}
\emph{A.~Deza, M.~Deza}, The ridge graph of the metric polytope and some relatives, Polytopes:
Abstract, convex and computational, T. Bisztriczky et al. (Editors), NATO ASI Series, Kluwer Academic (1994), 359--372.



\bibitem{EFHHH}
\emph{M.~Erickson, S.~Fernando, W.~Haemers, D.~Hardy, J.~Hemmeter}, Deza graphs: a generalization of strongly regular graphs J. Comb. Designs., \textbf{7} (1999), 359--405.


\bibitem{GGK}
\emph{A.~Gavrilyuk, S.~Goryainov, V.~Kabanov}, On the vertex connectivity of Deza graphs, Proc. Steklov Inst. Math. \textbf{285} (Suppl 1) (2014), 68--77.



\bibitem{Gordon}
\emph{D.~Gordon}, Database of difference sets, https://www.dmgordon.org/diffset/.


\bibitem{GHKS}
\emph{S. Goryainov, W. H. Haemers, V. Kabanov, L. Shalaginov}, Deza graphs with parameters $(n,k,k-1,a)$ and $\beta=1$, J. Comb. Des., \textbf{27} (2019), 188--202.


\bibitem{GP}
\emph{S. Goryainov, D. Panasenko}, On vertex connectivity of Deza graphs with parameters of complements to Seidel graphs, Eur. J. Comb., \textbf{80} (2019), 143--150. 



\bibitem{GPSh}
\emph{S.~Goryainov, D.~Panasenko, L.~Shalaginov}, Enumeration of strictly Deza graphs with at most~$21$ vertices, Sib. Elect. Math. Rep., \textbf{18}, No.~2 (2021), 1423--1432.


\bibitem{GSh}
\emph{S.~Goryainov, L.~Shalaginov}, Cayley-Deza graphs with less than~$60$ vertices, Sib. Elect. Math. Rep., \textbf{11} (2014), 268--310.

\bibitem{GSh2}
\emph{S.~Goryainov, L.~Shalaginov}, Deza graphs: a survey and new results, arxiv.org/abs/2103.00228 (2021). 


\bibitem{HKM}
\emph{W. H. Haemers, H. Kharaghani, M. A. Meulenberg}, Divisible design graphs,  J. Combin. Theory, Ser. A, \textbf{118} (2011), 978--992.


\bibitem{KKS}
\emph{V.~Kabanov, E.~Konstantinova, L.~Shalaginov}, Generalised dual Seidel switching and Deza graphs with strongly regular children, Discrete Math., \textbf{344}, No.~3 (2021), Article ID 112238.


\bibitem{KMS}
\emph{V.~Kabanov, N.~Maslova, L.~Shalaginov}, On strictly Deza graphs with parameters $(n,k,k-1,a)$, Eur. J. Comb., \textbf{80} (2019), 194--202.


\bibitem{KS}
\emph{V.~Kabanov, L.~Shalaginov}, Deza graphs with parameters $(n,k,k-2,a)$, J. Comb. Des., \textbf{28} (2020), 658--669.



\bibitem{Lam}
\emph{C. W. H. Lam}, On relative difference sets, in Proc. 7th Manitoba Conference on Numerical Mathematics and Computing (1977), 445--474.


\bibitem{MS}
\emph{S.~L.~Ma,~B.~Schmidt}, On $(p^a,p,p^a,p^{a-1})$-relative difference sets, Des. Codes Cryptogr., \textbf{6} (1995), 57--71.



\bibitem{Pan}
\emph{D.~Panasenko}, Strictly Deza graphs, database, http://alg.imm.uran.ru/dezagraphs/main.html. 



\bibitem{Pott2}
\emph{A.~Pott}, A survey on relative difference set, in: Groups, Difference Sets, and the Monster, eds. K. T. Arasu et al., De Gruyter (1996), 195--232.



\bibitem{Ry} 
\emph{G.~Ryabov}, Divisible design graphs from Higmanian association schemes, in preparation (2025).




\bibitem{RS}
\emph{G.~Ryabov, L.~Shalaginov}, On WL-rank and WL-dimension of some Deza dihedrants, Zapiski Nauchnykh Seminarov POMI, \textbf{518} (2022), 152--172.


\end{thebibliography}
\end{document}